\documentclass[11pt,reqno]{amsart}

\setlength{\textwidth}{6.3in} \setlength{\textheight}{9.25in}
\setlength{\evensidemargin}{0in} \setlength{\oddsidemargin}{0in}
\setlength{\topmargin}{-.3in}

\usepackage{xspace}
\usepackage{amsmath,amsthm,amsfonts,amssymb,latexsym,mathrsfs,color,extarrows}
\usepackage{hyperref}

\newtheorem{theorem}{Theorem}

\newtheorem{proposition}[theorem]{Proposition}

\newtheorem{definition}[theorem]{Definition}
\newtheorem{example}[theorem]{Example}

\newcommand{\el}{{\rm el\,}}
\newcommand{\ol}{{\rm ol\,}}
\newcommand{\m}{{\rm M}}

\newcommand{\fap}{{\rm fap\,}}
\newcommand{\dasc}{{\rm dasc\,}}
\newcommand{\desp}{{\rm dp\,}}
\newcommand{\Dasc}{{\rm Dasc\,}}
\newcommand{\Desp}{{\rm DP\,}}
\newcommand{\Lap}{{\rm LAP\,}}
\newcommand{\NDP}{{\rm NDP\,}}
\newcommand{\fdes}{{\rm fdes\,}}
\newcommand{\fasc}{{\rm fasc\,}}

\newcommand{\plat}{{\rm plat\,}}
\newcommand{\ap}{{\rm ap\,}}
\newcommand{\lap}{{\rm lap\,}}

\newcommand{\des}{{\rm des\,}}

\newcommand{\mmn}{\mathcal{M}_{2n}}

\newcommand{\msn}{\mathfrak{S}_n}

\newcommand{\ms}{\mathfrak{S}}
\newcommand{\maj}{{\rm maj\,}}
\newcommand{\inv}{{\rm inv\,}}

\newcommand{\mq}{\mathcal{Q}}
\newcommand{\mqn}{\mathcal{Q}_n}

\newcommand{\asc}{{\rm asc\,}}

\newcommand{\Eulerian}[2]{\genfrac{<}{>}{0pt}{}{#1}{#2}}
\newcommand{\Stirling}[2]{\genfrac{\{}{\}}{0pt}{}{#1}{#2}}

\linespread{1.25}

\title{The ascent-plateau statistics on Stirling permutations}
\author[S.-M.~Ma]{Shi-Mei Ma}
\address{School of Mathematics and Statistics,
        Northeastern University at Qinhuangdao,
         Hebei 066004, P.R. China}
\email{shimeimapapers@163.com (S.-M. Ma)}
\author[J.~Ma]{Jun Ma}
\address{Department of mathematics, Shanghai jiao tong university, Shanghai, P.R. China}
\email{majun904@sjtu.edu.cn(J.~Ma)}
\author[Y.-N. Yeh]{Yeong-Nan Yeh}
\address{Institute of Mathematics,
        Academia Sinica, Taipei, Taiwan}
\email{mayeh@math.sinica.edu.tw (Y.-N. Yeh)}
\subjclass[2010]{Primary 05A05; Secondary 05A15}
\begin{document}

\maketitle
\begin{abstract}
In this paper, several variants of the ascent-plateau statistic are introduced, including flag ascent-plateau, double ascent and descent-plateau.   We first study the flag ascent-plateau statistic on Stirling permutations by using context-free grammars.
We then present a unified refinement of the ascent polynomials and the ascent-plateau polynomials. In particular, by using Foata and Strehl's group action, we prove two bistatistics over the set of Stirling permutations of order $n$ are equidistributed.

\bigskip

\noindent{\sl Keywords}: Stirling permutations; Context-free grammars; Ascents; Plateaus; Ascent-plateaus
\end{abstract}
\date{\today}
\section{Introduction}
A {\it Stirling permutation} of order $n$ is a permutation of the multiset $\{1,1,2,2,\ldots,n,n\}$ such that
for each $i$, $1\leq i\leq n$, all entries between the two occurrences of $i$ are larger than $i$.
Denote by $\mqn$ the set of {\it Stirling permutations} of order $n$.
Let $\sigma=\sigma_1\sigma_2\cdots\sigma_{2n}\in\mqn$. For $1\leq i\leq 2n$, we say that an index $i$ is a {\it descent} of $\sigma$ if
$\sigma_{i}>\sigma_{i+1}$ or $i=2n$, and we say that an index $i$ is an {\it ascent} of $\sigma$ if $\sigma_{i}<\sigma_{i+1}$ or $i=1$.
Hence the index $i=1$ is always an ascent and $i=2n$ is always a descent.
Moreover, a {\it plateau} of $\sigma$ is an index $i$ such that $\sigma_{i}=\sigma_{i+1}$, where $1\leq i\leq 2n-1$.
Let $\des(\sigma),\asc(\sigma)$ and $\plat(\sigma)$ be the numbers of descents, ascents and plateaus of $\sigma$, respectively.

Stirling permutations were defined by Gessel and Stanley~\cite{Gessel78}, and they proved that
$$(1-x)^{2k+1}\sum_{n=0}^\infty \Stirling{n+k}{n}x^n=\sum_{\sigma\in\mq_k}x^{\des{\sigma}},$$
where $\Stirling{n}{k}$ is the {\it Stirling number of the second kind}, i.e.,
the number of ways to partition a set of $n$ objects into $k$ non-empty subsets.
A classical result of B\'ona~\cite{Bona08} says that descents, ascents and plateaus have the same distribution over $\mqn$, i.e.,
$$\sum_{\sigma\in\mqn}x^{\des{\sigma}}=\sum_{\sigma\in\mqn}x^{\asc{\sigma}}=\sum_{\sigma\in\mqn}x^{\plat{\sigma}}.$$
This equidistributed result and associated multivariate polynomials have been extensively studied by Janson, Kuba, Panholzer, Haglund, Chen et al., see~\cite{Chen17,Haglund12,Janson08,Janson11} and references therein.

Recently, Ma and Toufik~\cite{Ma15} introduced the definition of
ascent-plateau statistic and presented a combinatorial interpretation of the $1/k$-Eulerian polynomials.
The purpose of this paper is to explore variants of the ascent-plateau statistic. In the following, we collect some definitions, notation and results that will be needed throughout this paper.

\begin{definition}
An occurrence of an {\it ascent-plateau} of $\sigma\in\mqn$ is an index $i$ such that $\sigma_{i-1}<\sigma_{i}=\sigma_{i+1}$, where $i\in\{2,3,\ldots,2n-1\}$. An occurrence of a {\it left ascent-plateau} is an index $i$ such that $\sigma_{i-1}<\sigma_{i}=\sigma_{i+1}$, where $i\in\{1,2,\ldots,2n-1\}$ and $\sigma_0=0$.
\end{definition}
Let $\ap(\sigma)$ and $\lap(\sigma)$ be the numbers of ascent-plateaus and left ascent-plateaus of $\sigma$, respectively.
For example, $\ap(442\textbf{3}32115\textbf{6}65)=2$ and $\lap(\textbf{4}42\textbf{3}32115\textbf{6}65)=3$.

Define $$M_n(x)=\sum_{\sigma\in \mqn}x^{\ap(\sigma)},~N_n(x)=\sum_{\sigma\in \mqn}x^{\lap(\sigma)}.$$
According to~\cite[Theorem 2,~Theorem 3]{Ma15}, we have
\begin{equation}\label{Mxt}
M(x,t)=\sum_{n\geq 0}M_n(x)\frac{t^n}{n!}=\sqrt{\frac{x-1}{x-e^{2t(x-1)}}},
\end{equation}
\begin{equation}\label{Nxt}
N(x,t=)\sum_{n\geq 0}N_n(x)\frac{t^n}{n!}=\sqrt{\frac{1-x}{1-xe^{2t(1-x)}}}.
\end{equation}
It should be noted that the polynomials $M_n(x)$ and $N_n(x)$ are also enumerative polynomials of perfect matchings.
A {\it perfect matching} of $[2n]$ is a partition of $[2n]$ into $n$ blocks of size $2$. Let $\mmn$ be the set of perfect matchings of $[2n]$.
Let $\el(\m)$ (resp. $\ol(\m)$) be the number of blocks of $\m\in\mmn$ with even (resp. odd) larger entries.
According to~\cite{MaYeh16}, we have $$M_n(x)=\sum_{\m\in\mmn}x^{\ol(\m)},~N_n(x)=\sum_{\m\in\mmn}x^{\el(\m)}.$$

Let $\#C$ denote the cardinality of a set $C$.
Let $\msn$ denote the symmetric group of all permutations $\pi=\pi(1)\pi(2)\dots \pi(n)$ of $[n]$,
where $[n]=\{1,2,\ldots,n\}$. A {\it descent} of
$\pi$ is an index $i\in[n-1]$ such that $\pi(i)>\pi(i+1)$.
For $\pi\in\msn$, let $\des(\pi)$ be the number of descents of $\pi$.
The classical Eulerian polynomials are defined by
$$A_n(x) =\sum_{\pi\in\msn}x^{\des(\pi)}.$$
The {\it hyperoctahedral group} $B_n$ is the group of signed permutations of the set $\pm[n]$ such that $\pi(-i)=-\pi(i)$ for all $i$, where $\pm[n]=\{\pm1,\pm2,\ldots,\pm n\}$. Throughout this paper, we always identify a signed permutation $\pi=\pi(1)\cdots\pi(n)$ with the word $\pi(0)\pi(1)\cdots\pi(n)$, where $\pi(0)=0$.
For each $\pi\in B_n$, we define
\begin{equation*}
\begin{split}
\des_A(\pi)&=\#\{i\in [n-1]: \pi(i)>\pi(i+1)\},\\
\des_B(\pi)&=\#\{i\in \{0,1,2\ldots,n-1\}: \pi(i)>\pi(i+1)\}.
\end{split}
\end{equation*}
It is clear that $$\sum_{\pi\in B_n}x^{\des_A(\pi)}=2^nA_n(x).$$
Following~\cite{Adin2001}, the {\it flag descents} of $\pi\in B_n$ is defined by
$$\fdes(\pi)=\left\{
               \begin{array}{ll}
                 2\des_A(\pi)+1, & \hbox{if $\pi(1)<0$;} \\
                 2\des_A(\pi), & \hbox{otherwise.}
               \end{array}
             \right.
$$
The Eulerian polynomial of type $B$ and the flag descent polynomial are respectively defined by
\begin{align*}
  B_n(x) =\sum_{\pi\in B_n}x^{\des_B(\pi)},~
  F_n(x) =\sum_{\pi\in B_n}x^{\fdes(\pi)}.
\end{align*}

Very recently, we studied the following combinatorial expansions (see~\cite[Section 4]{Ma17}):
\begin{equation*}\label{NnxAnx}
2^nxA_n(x)=\sum_{k=0}^n\binom{n}{k}N_k(x)N_{n-k}(x),~
B_n(x)=\sum_{k=0}^n\binom{n}{k}N_k(x)M_{n-k}(x).
\end{equation*}
It is now well known that $F_n(x)=(1+x)^nA_n(x)$ (see~\cite[Theorem 4.4]{Adin2001}).
This paper is motivated by exploring an expansion of $F_n(x)$ in terms of some enumerative polynomials of Stirling permutations.

This paper is organized as follows.
In Section~\ref{Section2}, we present a combinatorial expansion of $F_n(x)$.
In Section~\ref{Section03}, we study a multivariate enumerative polynomials of Stirling permutations.
In particular, we consider Foata and Strehl's group action on Stirling permutations.
\section{The flag descent polynomials and flag ascent-plateau polynomials}\label{Section2}

Context-free grammar is a powerful tool to study exponential structures (see~\cite{Chen17,Ma17} for instance).
In this section, we first present a grammatical description of the flag descent polynomials by using grammatical labeling introduced by Chen and Fu~\cite{Chen17}. And then, we study the flag ascent-plateau statistics over Stirling permutations.

\subsection{Context-free grammars}\label{Section02}
\hspace*{\parindent}

For an alphabet $A$, let $\mathbb{Q}[[A]]$ be the rational commutative ring of formal power
series in monomials formed from letters in $A$. Following~\cite{Chen93}, a context-free grammar over
$A$ is a function $G: A\rightarrow \mathbb{Q}[[A]]$ that replaces a letter in $A$ by a formal function over $A$.
The formal derivative $D$ is a linear operator defined with respect to a context-free grammar $G$. More precisely,
the derivative $D=D_G$: $\mathbb{Q}[[A]]\rightarrow \mathbb{Q}[[A]]$ is defined as follows:
for $x\in A$, we have $D(x)=G(x)$; for a monomial $u$ in $\mathbb{Q}[[A]]$, $D(u)$ is defined so that $D$ is a derivation,
and for a general element $q\in\mathbb{Q}[[A]]$, $D(q)$ is defined by linearity.

Let us now recall a result on
context-free grammars.
\begin{proposition}[{\cite[Theorem~10]{Ma131}}]\label{Ma13}
Let $A=\{x,y,z\}$ and
\begin{equation}\label{grammar-flag}
G=\{x\rightarrow xyz, y\rightarrow yz^2, z\rightarrow y^2z\}.
\end{equation}
For $n\geq 0$, we have
\begin{equation}\label{Dnxy}
 D^n(xy)=xy\sum_{\pi\in B_n}y^{\fdes(\pi)}z^{2n-\fdes(\pi)}.
\end{equation}
Moreover,
\begin{align*}
 D^n(y^2)& =y^2\sum_{\pi\in B_n}y^{2\des_A(\pi)}z^{2n-2\des_A(\pi)}, \\
 D^n(yz)& =yz\sum_{\pi\in B_n}y^{2\des_B(\pi)}z^{2n-2\des_B(\pi)},\\
 D^n(y)& =y\sum_{\pi\in \mqn}y^{2\ap(\sigma)}z^{2n-2\ap(\sigma)},\\
 D^n(z)& =z\sum_{\pi\in \mqn}y^{2\lap(\sigma)}z^{2n-2\lap(\sigma)}.
\end{align*}
\end{proposition}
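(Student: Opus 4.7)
The plan is to establish all five identities by induction on $n$, using the grammatical labeling method of Chen and Fu~\cite{Chen17}. The base case $n=0$ reduces to the trivial statement $D^0(\ell) = \ell$ for each starting monomial $\ell \in \{xy, y^2, yz, y, z\}$, with the right-hand side collapsing to the unique contribution from the empty Stirling permutation or signed permutation.

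For the Stirling-permutation identities $D^n(y)$ and $D^n(z)$, the inductive step rests on a classification of the $2n+1$ slots of a given $\sigma \in \mqn$ according to how the insertion of the pair $(n+1)(n+1)$ affects $\ap$ (respectively $\lap$). A direct case analysis shows: (i) slot $0$ preserves $\ap$; (ii) for each ascent-plateau of $\sigma$ at position $j$, the two adjacent slots $j-1$ and $j$ also preserve $\ap$, because the new pair $(n+1)(n+1)$ either severs or immediately precedes the existing plateau, destroying one ascent-plateau while creating another; and (iii) each of the remaining $2n - 2\ap(\sigma)$ slots raises $\ap$ by exactly one. Summing over $\sigma \in \mqn$ yields the recurrence
\[
\sum_{\tau \in \mq_{n+1}} y^{2\ap(\tau)} z^{2n+2-2\ap(\tau)} = \sum_{\sigma \in \mqn} \Bigl[(1+2\ap(\sigma))\, y^{2\ap(\sigma)} z^{2n+2-2\ap(\sigma)} + (2n-2\ap(\sigma))\, y^{2\ap(\sigma)+2} z^{2n-2\ap(\sigma)}\Bigr],
\]
which is exactly what one obtains by applying $D$ to $y\sum_\sigma y^{2\ap(\sigma)} z^{2n-2\ap(\sigma)}$ under $G$, closing the induction. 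The $\lap$ case is parallel, with one pleasant simplification: when $j=1$ the adjacent preserving slots are precisely slot $0$ and slot $1$, so no separate boundary correction is needed and the preserving count is exactly $2\lap(\sigma)$, matching the absence of an extra $y$-factor in the formula for $D^n(z)$.

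For the signed-permutation identities $D^n(y^2)$, $D^n(yz)$, and $D^n(xy)$, the same strategy is applied to the $2(n+1)$ signed slots of $\pi \in B_n$, each of the $n+1$ gaps admitting insertion of either $+(n+1)$ or $-(n+1)$. A parallel case analysis expresses the change in $\des_A$, $\des_B$, or $\fdes$ under each insertion and yields recurrences matching $y \to yz^2$, $z \to y^2 z$, and, in the flag-descent case, the additional rule $x \to xyz$.

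The main obstacle is the flag-descent identity $D^n(xy)$: because $\fdes$ increases by $2$ from a type-$A$ descent but by $1$ from a negative leading entry, the left boundary slot is intrinsically asymmetric. Inserting $+(n+1)$ at gap $0$ raises $\fdes$ by $2$, whereas inserting $-(n+1)$ there raises it by only $1$. This asymmetry is precisely encoded by the special label $x$ with rule $x \to xyz$, whose three factors simultaneously account for the new leading boundary slot, the newly created sign bonus, and the descent now separating the inserted element from the old $\pi(1)$. Once this bookkeeping is verified, the induction closes routinely.
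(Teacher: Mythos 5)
Your proposal is correct and follows essentially the same grammatical-labeling/insertion induction that the paper uses (the paper only writes out the $D^n(xy)$ case in detail, citing the remaining identities from [Ma131], whereas you sketch all five; your slot counts for the $\ap$ and $\lap$ cases check out against the rules $y\to yz^2$ and $z\to y^2z$). One caveat on the flag-descent case: your claim that inserting $\overline{n+1}$ at gap $0$ raises $\fdes$ by $1$ is valid only when $\pi(1)>0$; when $\pi(1)<0$ the two front insertions give increments $+1$ (for $n+1$) and $0$ (for $\overline{n+1}$) rather than $+2$ and $+1$, which is precisely why the paper's labeling of the position after $\pi(0)$ (rules $L_3$/$L_4$ and cases $c_4$/$c_5$) must depend on the sign of $\pi(1)$ --- your deferred ``bookkeeping'' needs this case split to close the induction.
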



The grammatical labeling is illustrated in the following proof of~\eqref{Dnxy}.
Let $\pi\in B_n$. As usual, denote by $\overline{i}$ the negative element $-i$.
We define an {\it ascent} (resp. a descent) of $\pi$ to be a position $i\in \{0,1,2\ldots,n-1\}$ such that
$\pi(i)<\pi(i+1)$ (resp. $\pi(i)>\pi(i+1)$).
Now we give a labeling of $\pi\in B_n$ as follows:
\begin{itemize}
  \item [\rm ($L_1$)]If $i\in[n-1]$ is an ascent, then put a superscript label $z$ and a subscript label $z$ right after $\pi(i)$;
 \item [\rm ($L_2$)]If $i\in[n-1]$ is a descent, then put a superscript label $y$ and a subscript label $y$ right after $\pi(i)$;
\item [\rm ($L_3$)]If $\pi(1)>0$, then put a superscript label $z$ and a subscript label $x$ right after $\pi(0)$;
\item [\rm ($L_4$)]If $\pi(1)<0$, then put a superscript label $x$ and a subscript label $y$ right after $\pi(0)$;
\item [\rm ($L_5$)] Put a superscript label $y$ and a subscript label $z$ at the end of $\pi$.
\end{itemize}
Note that the weight of $\pi$ is given by $w(\pi)=xy^{\fdes(\pi)+1}z^{\fasc(\pi)+1}$.

Let $$F_n(i,j)=\{\pi\in B_n: \fdes(\pi)=i, \fasc(\pi)=j\}.$$
When $n=1$, we have $F_1(0,1)=\{0^z_x1^y_z\}$ and $F_1(1,0)=\{0^x_{y}\overline{1}^y_z\}$.
Note that
$D(xy)=xyz^2+xy^2z$. Thus the sum of weights of the elements of $B_1$ is given by $D(xy)$.

Suppose we get all labeled permutations in $F_n(i,j)$ for all $i,j,k$, where $n\geq 1$. Let
$\pi'\in B_{n+1}$ be obtained from $\pi\in F_n(i,j)$ by inserting the entry $n+1$ or $\overline{n+1}$.
We distinguish the following five cases:
\begin{itemize}
  \item [\rm ($c_1$)] Let $i\in [n-1]$ be an ascent. If we insert $n+1$ (resp.~$\overline{n+1}$) right after $\pi(i)$, then $\pi'\in F_{n+1}(i+2,j)$, and the insertion of $n+1$ (resp.~$\overline{n+1}$) corresponds to applying the rule $z\rightarrow y^2z$ to the superscript (resp. subscript) label $z$ associated with $\pi(i)$.
\item [\rm ($c_2$)] Let $i\in [n-1]$ be a descent. If we insert $n+1$ (resp.~$\overline{n+1}$) right after $\pi(i)$, then $\pi'\in F_{n+1}(i,j+2)$, and the insertion of $n+1$ (resp.~$\overline{n+1}$) corresponds to applying the rule $y\rightarrow yz^2$ to the superscript (resp. subscript) label $y$ associated with $\pi(i)$.
    \item [\rm ($c_3$)] If we insert $n+1$ (resp. $\overline{n+1}$) at the end of $\pi$, then $\pi'\in F_{n+1}(i,j+2)$ (resp. $\pi'\in F_{n+1}(i+2,j)$), and the insertion of $n+1$ (resp. $\overline{n+1}$) corresponds to applying the rule $y\rightarrow yz^2$ (resp. $z\rightarrow y^2z$) to the label $y$ (resp. $z$) at the end of $\pi$;
    \item [\rm ($c_4$)] If $\pi(1)>0$ and we insert $n+1$ (resp. $\overline{n+1}$) immediately before $\pi(1)$, then $\pi'\in F_{n+1}(i+2,j)$ (resp. $\pi'\in F_{n+1}(i+1,j+1)$), and the insertion of $n+1$ (resp. $\overline{n+1}$ corresponds to applying the rule $z\rightarrow y^2z$ (resp. $x\rightarrow xyz$) to the label $z$ (resp. $x$) right after $\pi(0)$;
  \item [\rm ($c_5$)] If $\pi(1)<0$ and we insert $n+1$ (resp. $\overline{n+1}$) immediately before $\pi(1)$, then $\pi'\in F_{n+1}(i+1,j+1)$ (resp. $\pi'\in F_{n+1}(i,j+2)$), and the insertion of $n+1$ (resp. $\overline{n+1}$ corresponds to applying the rule $x\rightarrow xyz$ (resp. $y\rightarrow yz^2$) to the label $x$ (resp. $y$) right after $\pi(0)$.
\end{itemize}

In general, the insertion of $n+1$ (resp. $\overline{n+1}$) into $\pi$ corresponds
to the action of the formal derivative $D$ on a superscript label (resp. subscript label).
By induction, we get a grammatical proof of~\eqref{Dnxy}.

\begin{example}
For example, let $\pi=04\overline{3}152$. Then $\pi$ can be generated as follows:
\begin{align*}
0^z_x1^\textbf{y}_z&\mapsto 0^z_x1^z_z2^y_z;\\
0^z_\textbf{x}1^z_z2^y_z&\mapsto 0^x_y\overline{3}^z_z1^z_z2^y_z;\\
 0^\textbf{x}_y\overline{3}^z_z1^z_z2^y_z&\mapsto  0^z_x4^y_y\overline{3}^z_z1^z_z2^y_z;\\
0^z_x4^y_y\overline{3}^z_z1^\textbf{z}_z2^y_z&\mapsto 0^z_x4^y_y\overline{3}^z_z1^z_z5^y_y2^y_z.
\end{align*}
\end{example}

\subsection{The flag ascent-plateau statistic}

\begin{definition}
Let $\sigma=\sigma_1\sigma_2\cdots\sigma_{2n}\in\mqn$.
The number of flag ascent-plateau of $\sigma$ is defined by $$\fap(\sigma)=\left\{
               \begin{array}{ll}
                 2\ap(\sigma)+1, & \hbox{if $\sigma_1=\sigma_2$;} \\
                 2\ap(\sigma), & \hbox{otherwise.}
               \end{array}
             \right.
$$
\end{definition}

We can now present the first main result of this paper.
\begin{theorem}
Let $D$ be the formal derivative with respect to the grammar~\eqref{grammar-flag}. For $n\geq 1$, we have
\begin{equation}\label{Dnx}
D^n(x)=x\sum_{\sigma\in\mqn}y^{\fap(\sigma)}z^{2n-\fap(\sigma)}.
\end{equation}
Therefore,
\begin{equation}\label{Fnx}
\sum_{\pi\in B_n}x^{\fdes(\pi)}=\sum_{k=0}^n\binom{n}{k}\sum_{\sigma\in\mq_{k}}x^{\fap(\sigma)}\sum_{\sigma\in\mq_{n-k}}x^{2\ap(\sigma)}.
\end{equation}
\end{theorem}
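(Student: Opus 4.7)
The plan is to prove equation~\eqref{Dnx} first by induction on $n$ using a grammatical labeling of Stirling permutations, and then to deduce equation~\eqref{Fnx} from it by the Leibniz rule. For each $\sigma\in\mqn$ I would label the $2n+1$ positions at which the pair $(n+1)(n+1)$ can be inserted by a single letter from $\{x,y,z\}$, so that the product of the labels equals $xy^{\fap(\sigma)}z^{2n-\fap(\sigma)}$. Concretely, the leftmost gap (before $\sigma_1$) gets $x$ if $\sigma_1\neq\sigma_2$ and $y$ otherwise; the rightmost gap (after $\sigma_{2n}$) always gets $z$; an interior gap between $\sigma_i$ and $\sigma_{i+1}$ with $1\le i\le 2n-1$ gets $x$ if it is the plateau at position $i=1$, $y$ if it is an ascent immediately preceding an ascent-plateau or a plateau at $i\ge 2$ immediately preceded by an ascent, and $z$ in the remaining cases. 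A bookkeeping check confirms that this produces exactly one $x$-label, exactly $\fap(\sigma)$ many $y$-labels, and exactly $2n-\fap(\sigma)$ many $z$-labels.

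The crux of the induction is to verify that inserting $(n+1)(n+1)$ at a gap labeled $\alpha\in\{x,y,z\}$ changes $\fap$ by exactly $1$, $0$, or $2$, respectively, matching the grammar rules $x\to xyz$, $y\to yz^2$, and $z\to y^2z$. Granted this, summing over the $2n+1$ children of $\sigma$ yields $D\bigl(xy^{\fap(\sigma)}z^{2n-\fap(\sigma)}\bigr)$, and the induction closes by applying $D$ to the inductive hypothesis and interchanging the order of summation, giving $D^{n+1}(x)=x\sum_{\sigma'\in\mq_{n+1}}y^{\fap(\sigma')}z^{2(n+1)-\fap(\sigma')}$. Equation~\eqref{Fnx} then follows at once: expand $D^n(xy)=\sum_{k=0}^n\binom{n}{k}D^k(x)D^{n-k}(y)$ via the Leibniz rule, substitute Proposition~\ref{Ma13}'s expressions for $D^n(xy)$ and $D^{n-k}(y)$ together with \eqref{Dnx} for $D^k(x)$, divide through by $xy$, and specialize $y=x$, $z=1$.

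The main obstacle is the case analysis behind the $\Delta\fap$ claim. For interior ascent, descent, and plateau gaps with $i\ge 2$ it is a local computation: inserting there always creates a fresh ascent-plateau at the inserted pair, while simultaneously destroying at most one pre-existing ascent-plateau at the immediately adjacent positions $i$ or $i+1$, and tracking this give-and-take recovers the intended $\Delta\fap\in\{0,2\}$ depending on the gap type. The delicate part is the two boundary gaps $i=0$ and $i=1$, where the indicator $[\sigma_1=\sigma_2]$ in the definition of $\fap$ interacts nontrivially with the insertion: inserting at gap $0$ when $\sigma_1\neq\sigma_2$ creates a new starting plateau and flips this indicator from $0$ to $1$, while inserting at gap $1$ when $\sigma_1=\sigma_2$ splits the old starting plateau but creates a new ascent-plateau, yielding $\Delta\fap=1$ in both cases. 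These are precisely the two configurations where the unique $x$-label sits.
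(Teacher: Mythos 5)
Your proposal is correct and follows essentially the same route as the paper: your gap-labeling of the $2n+1$ insertion slots is exactly the paper's grammatical labeling $(L_1)$--$(L_4)$, the induction via the change $\Delta\fap\in\{1,0,2\}$ under insertion of $(n+1)(n+1)$ at an $x$-, $y$-, or $z$-labeled gap matches the paper's cases $(c_1)$--$(c_4)$ (including the two boundary configurations carrying the unique $x$-label), and the deduction of \eqref{Fnx} from \eqref{Dnx} via the Leibniz rule and Proposition~\ref{Ma13} is the same. Your statement of the Leibniz expansion $D^n(xy)=\sum_{k=0}^n\binom{n}{k}D^k(x)D^{n-k}(y)$ is in fact the corrected form of the typo in the paper's proof.
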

\begin{proof}
We first introduce a grammatical labeling of $\sigma\in \mqn$ as follows:
\begin{itemize}
  \item [\rm ($L_1$)]If $i\in \{2,3,\ldots,2n-1\}$ is an ascent-plateau, then put a superscript label $y$ immediately before $\sigma_i$ and a superscript label $y$ right after $\sigma_i$;
 \item [\rm ($L_2$)]If $\sigma_1=\sigma_2$, then put a superscript label $y$ immediately before $\sigma_1$ and a superscript $x$ right after $\sigma_1$;
\item [\rm ($L_3$)]If $\sigma_1<\sigma_2$, then put a superscript label $x$ immediately before $\sigma_1$;
\item [\rm ($L_4$)]The rest of positions in $\sigma$ are labeled by a superscript label $z$.
\end{itemize}
Note that the weight of $\sigma$ is given by $$w(\sigma)=xy^{\fap(\sigma)}z^{2n-\fap(\sigma)}.$$
For example, The labeling of $1223314554$ and $661223314554$ are respectively given as follows:
$$^x1^y2^y2^y3^y3^z1^z4^y5^y5^z4^z,~^y6^x6^z1^y2^y2^y3^y3^z1^z4^y5^y5^z4^z.$$

We then prove~\eqref{Dnx} by induction. Let $S_n(i)=\{\sigma\in\mqn: \fap(\sigma)=i\}$.
For $n=1$, we have $S_1(1)=\{^y1^x1^z\}$.
For $n=2$, the elements of $\mq_2$ are labeled as follows:
$$S_2(1)=\{^y2^x2^z1^z1^z\},~S_2(2)=\{^x1^y2^y2^z1^z\},~S_2(3)=\{^y1^x1^y2^y2^z\}.$$
Note that $D(x)=xyz$ and $D^2(x)=xyz(y^2+yz+z^2)$. Hence the result holds for $n=1,2$.
Suppose we get all labeled Stirling permutations of $S_n(i)$ for all $i$, where $n\geq 2$.
Let $\sigma'\in\mq_{n+1}$ be obtained from $\sigma\in S_n(i)$ by inserting the pair $(n+1)(n+1)$ into $\sigma$.
We distinguish the following three cases:
\begin{itemize}
  \item [\rm ($c_1$)] If $\sigma_1=\sigma_2$ and the pair $(n+1)(n+1)$ is inserted at the front of $\sigma$, then the change of labeling is illustrated as follows: $$^y\sigma_1^x\sigma_2\cdots\mapsto ^y(n+1)^x(n+1)^z\sigma_1^z\sigma_2\cdots.$$
In this case, the insertion corresponds to the rule $y\mapsto yz^2$ and $\sigma'\in S_{n+1}(i)$;
\item [\rm ($c_2$)]If $\sigma_1<\sigma_2$ and the pair $(n+1)(n+1)$ is inserted at the front of $\sigma$, then the change of labeling is illustrated as follows: $$^x\sigma_1\cdots\mapsto ^y(n+1)^x(n+1)^z\sigma_1\cdots.$$
In this case, the insertion corresponds to the rule $x\mapsto xyz$ and $\sigma'\in S_{n+1}(i+1)$;
\item [\rm ($c_3$)]If $i$ is an ascent plateau of $\sigma$, and the pair $(n+1)(n+1)$ is inserted immediately before or right after $\sigma_i$, then the change of labeling are illustrated as follows: $$\cdots\sigma_{i-1}^y\sigma_i^y\sigma_{i+1}\cdots\mapsto \cdots\sigma_{i-1}^y(n+1)^y(n+1)^z\sigma_i^z\sigma_{i+1}\cdots,$$
$$\cdots\sigma_{i-1}^y\sigma_i^y\sigma_{i+1}\cdots\mapsto \cdots\sigma_{i-1}^z\sigma_i^y(n+1)^y(n+1)^z\sigma_{i+1}\cdots.$$
In this case, the insertion corresponds to the rule $y\mapsto yz^2$ and $\sigma'\in S_{n+1}(i)$;
\item [\rm ($c_4$)]If the pair $(n+1)(n+1)$ is inserted to a position with the label $z$, then the change of labeling are illustrated as follows: $$\cdots\sigma_{i}^z\cdots\mapsto \cdots\cdots\sigma_{i}^y(n+1)^y(n+1)^z\cdots.$$
In this case, the insertion corresponds to the rule $z\mapsto y^2z$ and $\sigma'\in S_{n+1}(i+2)$.
\end{itemize}
It is routine to check that each element of $\mq_{n+1}$ can be obtained exactly once. By induction, we present a constructive proof of~\eqref{Dnx}.
Using the Leibniz's formula, we have $D^n(xy)=\sum_{k=0}^nD^k(x)D^{n-1}(y)$.
Combining~\eqref{Dnx} and Proposition~\ref{Ma13},
we get the desired formula~\eqref{Fnx}.
\end{proof}

Let $$T_n(x)=\sum_{\sigma\in\mqn}x^{\fap(\sigma)}=\sum_{k\geq1}T(n,k)x^k.$$
From the proof of~\eqref{Dnx}, we see that
the numbers $T(n,k)$ satisfy the recurrence relation
\begin{equation*}\label{Tnk-recurrence}
T(n+1,k)=kT(n,k)+T(n,k-1)+(2n-k+2)T(n,k-2).
\end{equation*}
with the initial conditions $T(0,0)=1,~T_(1,1)=1$ and $T(1,k)=0$ for $k\neq 1$.
It should be noted that $T(n,k)$ is also the number of dual Stirling permutations of order $n$ with $k$ alternating runs (see~\cite{Mawang16}).
Recall that (see~\cite[A008292]{Sloane}):
\begin{equation*}\label{Axz}
A(x,t)=\sum_{n\geq 0}A_n(x)\frac{t^n}{n!}=\frac{x-1}{x-e^{t(x-1)}}.
\end{equation*}
Hence
$$F(x,t)=\frac{x-1}{x-e^{t(x^2-1)}}.$$
Let $T(x,t)=\sum_{n\geq 0}T_n(x)\frac{t^n}{n!}$. Write the formula~\eqref{Fnx} as follows:
$$F_n(x)=\sum_{k=0}^n\binom{n}{k}T_k(x)M_{n-k}(x^2).$$
Thus,
$F(x,t)=T(x,t)M(x^2,t)$.
Combining~\eqref{Mxt}, we get
\begin{equation}\label{Txt-GF}
T(x,t)=\frac{F(x,t)}{M(x^2,t)}=\frac{x-1}{x-e^{t(x^2-1)}}\sqrt{\frac{x^2-e^{2t(x^2-1)}}{x^2-1}}.
\end{equation}
Combining~\eqref{Nxt} and~\eqref{Txt-GF}, we have $$T(x,t)N(x^2,t)=1-x+xA(x,t(1+x)).$$
Therefore, a dual formula of~\eqref{Fnx} is given as follows:
$$\sum_{\pi\in B_n}x^{\fdes(\pi)+1}=\sum_{k=0}^n\binom{n}{k}\sum_{\sigma\in\mq_{k}}x^{\fap(\sigma)}\sum_{\sigma\in\mq_{n-k}}x^{2\lap(\sigma)}.$$
for $n\geq1$.

Let $\delta_{i,j}$ be the  Kronecker delta, i.e., $\delta_{i,j}=1$ if $i=j$ and $\delta_{i,j}=0$ if $i\neq j$.
It is not hard to verify that
$T(x,t)T(-x,t)=1$. In other words, $$\sum_{k=0}^n\binom{n}{k}T_k(x)T_{n-k}(-x)=\delta_{0,n}.$$


\section{Multivariate polynomials over Stirling polynomials}\label{Section03}
\hspace*{\parindent}

Let $$C_n(x)=\sum_{\sigma\in\mqn}x^{\asc(\sigma)}.$$
The polynomials $C_n(x)$ and $N_n(x)$ respectively satisfy the following recurrence relation
$$C_{n+1}(x)=(2n+1)xC_n(x)+x(1-x)C_n'(x),$$
$$N_{n+1}(x)=(2n+1)xN_n(x)+2x(1-x)N_n'(x),$$
with the initial conditions $C_0(x)=N_0(x)=1$ (see~\cite{Bona08,Gessel78,Ma132} for instance).
In this section, we shall present a unified refinement of the polynomials $C_n(x)$ and $N_n(x)$.

In the sequel, we always assume that Stirling permutations are prepended by $0$. That is, we identify an $n$-Stirling permutation
$\sigma_1\sigma_2\cdots\sigma_{2n}$ with the word $\sigma_0\sigma_1\sigma_2\cdots\sigma_{2n}$, where $\sigma_0=0$.

\subsection{A grammatical labeling of Stirling permutations}
\hspace*{\parindent}

\begin{definition}
Let $\sigma=\sigma_1\sigma_2\cdots\sigma_{2n}\in\mqn$. For $1\leq i\leq 2n$, a double ascent of $\sigma$ is an index $i$ such that
$\sigma_{i-1}<\sigma_i<\sigma_{i+1}$, a descent-plateau of $\sigma$ is an index $i$ such that
$\sigma_{i-1}>\sigma_i=\sigma_{i+1}$.
\end{definition}
Let $\dasc(\sigma)$ and $\desp(\sigma)$ denote the numbers of double ascents and descent-plateaus
of $\sigma$, respectively. For example, $\dasc(\textbf{2}4433211\textbf{5}665)=2$ and $\desp(244\textbf{3}32\textbf{1}15665)=2$.
It is clear that
\begin{equation}\label{asc-plat}
\asc(\sigma)=\lap(\sigma)+\dasc(\sigma),
\plat(\sigma)=\lap(\sigma)+\desp(\sigma).
\end{equation}

Define
\begin{align*}
P_n(x,y,z)&=\sum_{\sigma\in\mqn}x^{\lap(\sigma)}y^{\dasc(\sigma)}z^{\desp(\sigma)}=\sum_{i,j,k}P_n(i,j,k)x^iy^jz^k,
\end{align*}
where $1\leq i\leq n,0\leq j\leq n-1,0\leq k\leq n-1$. In particular, $$P_n(x,x,1)=P_n(x,1,x)=C_n(x),~P_n(x,1,1)=N_n(x).$$
The first few of the polynomials $P_n(x,y,z)$ are given as follows:
\begin{align*}
P_1(x,y,z)&=x,\\
P_2(x,y,z)&=xy+xz+x^2,\\
P_3(x,y,z)&=x(y^2+z^2)+4x^2(y+z)+2xyz+2x^2+x^3.
\end{align*}

Now we present the second main result of this paper.
\begin{theorem}\label{mthm02}
Let $A=\{x,y,z,p,q\}$ and
\begin{equation}\label{grammar-02}
G=\{x\rightarrow xzq,y\rightarrow yzp, z\rightarrow xyz, p\rightarrow xyz, q\rightarrow xyz\}.
\end{equation}
Then
\begin{equation*}\label{Dnz}
D^n(z)=z\sum_{i,j,k}P_n(i,j,k)(xy)^iq^jp^kz^{2n-2i-j-k},
\end{equation*}
where $1\leq i\leq n,0\leq j\leq n-1,0\leq k\leq n-1$ and $2i+j+k\leq 2n$.
Set $P_n=P_n(x,y,z)$. Then the polynomials $P_n(x,y,z)$ satisfy the recurrence relation
\begin{equation}\label{Pn-recu}
P_{n+1}=(2n+1)xP_n+(xy+xz-2x^2)\frac{\partial}{\partial x}P_n+x(1-y)\frac{\partial}{\partial y}P_n+x(1-z)\frac{\partial}{\partial z}P_n,
\end{equation}
with the initial condition $P_0(x,y,z)=1$.
\end{theorem}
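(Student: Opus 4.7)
My plan is to extend the grammatical-labeling technique of Proposition~\ref{Ma13} and of the proof of \eqref{Dnx} to the five-letter grammar~\eqref{grammar-02}. For each $\sigma=\sigma_1\cdots\sigma_{2n}\in\mqn$ (with $\sigma_0=0$ prepended), I will attach to each entry $\sigma_i$ one letter of $\{x,y,z,p,q\}$ determined by the local type of the position $i$: the letter is $x$ if $i$ is a left ascent-plateau (``lap-first''), $y$ if $i-1$ is a left ascent-plateau (``lap-second''), $q$ if $i$ is a double ascent, $p$ if $i$ is a descent-plateau (``desp-first''), and $z$ otherwise (at desp-seconds, valleys, double descents, or at the final position $i=2n$ when $\sigma_{2n-1}>\sigma_{2n}$). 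Setting $w(\sigma)$ equal to the product of these labels then yields $w(\sigma)=(xy)^{\lap(\sigma)}q^{\dasc(\sigma)}p^{\desp(\sigma)}z^{2n-2\lap(\sigma)-\dasc(\sigma)-\desp(\sigma)}$, which is exactly the monomial attached to $\sigma$ on the right-hand side of the claimed formula for $D^n(z)$.

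I will then prove $D^n(z)=z\sum_{\sigma\in\mqn}w(\sigma)$ by induction on $n$. The base case is immediate. For the inductive step, writing $Q_n=\sum_{\sigma}w(\sigma)$ and using $D(zQ_n)=xyz\,Q_n+z\,D(Q_n)$, the problem reduces to the local identity
\[
\sum_{i=0}^{2n}w(\sigma'_i)=xy\cdot w(\sigma)+D(w(\sigma))
\]
for each $\sigma\in\mqn$, where $\sigma'_i$ is obtained from $\sigma$ by inserting $(n+1)(n+1)$ into its $i$-th gap. I will verify this identity by partitioning the $2n+1$ gaps into five mutually exclusive and exhaustive families: the gap \emph{inside} each lap pair (realizing $x\to xzq$ at the lap-first's $x$-label), the gap \emph{just before} each lap pair (realizing $y\to yzp$ at the lap-second's $y$-label), the gap inside each desp pair (realizing $p\to xyz$ at the desp-first's $p$-label), the gap just before each double ascent (realizing $q\to xyz$ at the double-ascent's $q$-label), and the remaining gaps, each of which---because $n+1$ exceeds every existing entry and no old label changes---contributes exactly $xy\cdot w(\sigma)$. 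A routine count shows the last family contains $2n+1-2\lap-\dasc-\desp$ gaps, which equals the number of $z$-labels in $w(\sigma)$ plus one; this matches the combined contribution of applying $z\to xyz$ at each $z$-label and the extra $xy\cdot w(\sigma)$ term on the right-hand side. The main obstacle is the bookkeeping inside each family, where one must subdivide by the precise types of $\sigma_{i-1}$, $\sigma_i$, $\sigma_{i+1}$, $\sigma_{i+2}$; each subcase, however, reduces to a one-line check since the only new adjacency created is of the form ``something $<n+1$''.

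Once the grammatical identity is established, the recurrence~\eqref{Pn-recu} follows by direct computation. Observing that every monomial $x^ay^aq^bp^cz^d$ in $Q_n$ (with $d=2n-2a-b-c$) has equal $x$- and $y$-exponents, the polynomial $P_n(x,y,z)$ is obtained from $Q_n$ by the substitution $xy\mapsto x$, $q\mapsto y$, $p\mapsto z$, $z\mapsto 1$. Applying $D$ to such a generic monomial via the grammar produces five new monomials; after performing this substitution and repackaging the resulting sums via the Euler identities $\sum_{a,b,c}a P_n(a,b,c)X^aY^bZ^c=X\partial_XP_n$ (together with its analogues in $Y,Z$) and the relation $d=2n-2a-b-c$, one obtains exactly $2nxP_n+(xy+xz-2x^2)\partial_xP_n+x(1-y)\partial_yP_n+x(1-z)\partial_zP_n$. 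Adding the $xy\cdot Q_n$ term, which becomes $xP_n$ after the substitution, yields the claimed recurrence.
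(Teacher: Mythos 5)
Your proposal is correct and follows essentially the same route as the paper: a grammatical labeling of $\sigma\in\mqn$ with the five letters marking left ascent-plateaus, double ascents, descent-plateaus and the remaining positions, an induction on $n$ in which the $2n+1$ insertion gaps for the pair $(n{+}1)(n{+}1)$ are partitioned into five families matching the five production rules, and then the recurrence \eqref{Pn-recu} extracted by applying $D$ to a generic monomial and rewriting via Euler-operator identities. The only differences are presentational (you label entries and isolate the leading $z$ via $D(zQ_n)=xyzQ_n+zD(Q_n)$, while the paper labels gaps and first records the coefficient recurrence for $P_{n+1}(i,j,k)$), and your final bookkeeping reproduces \eqref{Pn-recu} exactly.
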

\begin{proof}
Now we give a labeling of $\sigma\in \mqn$ as follows:
\begin{itemize}
  \item [\rm ($L_1$)]If $i$ is a left ascent-plateau, then put a superscript label $y$ immediately before $\sigma_i$ and a superscript label $x$ right after $\sigma_i$;
 \item [\rm ($L_2$)]If $i$ is a double ascent, then put a superscript label $q$ immediately before $\sigma_i$;
\item [\rm ($L_3$)]If $i$ is a descent-plateau, then put a superscript label $p$ right after $\sigma_i$;
\item [\rm ($L_4$)]The rest positions in $\sigma$ are labeled by a superscript label $z$.
\end{itemize}
The weight of $\sigma$ is defined by $$w(\sigma)=z(xy)^{\lap(\sigma)}q^{\dasc(\sigma)}p^{\desp(\sigma)}z^{2n-2\lap(\sigma)-\dasc(\sigma)-\desp(\sigma)}.$$
For example, the labeling of $552442998813316776$ is as follows:
$$^y5^x5^z2^y4^x4^z2^y9^x9^z8^p8^z1^y3^x3^z1^q6^y7^x7^z6^z.$$

We proceed by induction on $n$.
Note that $\mq_1=\{^y1^x1^z\}$ and $$\mq_2=\{^y1^x1^y2^x2^z,~^q1^y2^x2^z1^z,~^y2^x2^z1^p1^z\}.$$
Thus the weight of $^y1^x1^z$ is given by $D(z)$ and the sum of weights of elements in $\mq_2$ is
given by $D^2(z)$, since
$D(z)=xyz$ and $D^2(x)=z(xyqz+xypz+x^2y^2)$.

Assume that the result holds for $n=m-1$, where $m\geq 3$.
Let $\sigma$ be an element counted by $P_{m-1}(i,j,k)$, and
let $\sigma'$ be an element of $\mq_{m}$ obtained by inserting the pair $mm$ into $\sigma$.
We distinguish the following five cases:
\begin{itemize}
  \item [\rm ($c_1$)]If the pair $mm$ is inserted at a position with label $x$, then the change of labeling is illustrated as follows: $$\cdots\sigma_{i-1}^y\sigma_i^x\sigma_{i+1}\cdots\mapsto \cdots\sigma_{i-1}^q\sigma_i^ym^xm^z\sigma_{i+1}\cdots.$$
In this case, the insertion corresponds to the rule $x\mapsto xzq$ and produces $i$ permutations in $\mq_m$ with $i$ left ascent-plateaus, $j+1$ double ascents and $k$ descent-plateaus;
  \item [\rm ($c_2$)]If the pair $mm$ is inserted at a position with label $y$, then the change of labeling is illustrated as follows: $$\cdots\sigma_{i-1}^y\sigma_i^x\sigma_{i+1}\cdots\mapsto \cdots\sigma_{i-1}^ym^xm^z\sigma_i^p\sigma_{i+1}\cdots.$$
In this case, the insertion corresponds to the rule $y\mapsto yzp$ and produces $i$ permutations in $\mq_m$ with $i$ left ascent-plateaus, $j$ double ascents and $k+1$ descent-plateaus;
  \item [\rm ($c_3$)]If the pair $mm$ is inserted at a position with label $z$, then the change of labeling is illustrated as follows: $$\cdots\sigma_i^z\sigma_{i+1}\cdots\mapsto \cdots\sigma_i^ym^xm^z\sigma_{i+1}\cdots.$$
In this case, the insertion corresponds to the rule $z\mapsto xyz$ and produces $2m-2-2i-j-k$ permutations in $\mq_m$ with $i+1$ left ascent-plateaus, $j$ double ascents and $k$ descent-plateaus;
  \item [\rm ($c_4$)]If the pair $mm$ is inserted at a position with label $q$, then the change of labeling is illustrated as follows: $$\cdots\sigma_i^q\sigma_{i+1}\cdots\mapsto \cdots\sigma_i^ym^xm^z\sigma_{i+1}\cdots.$$
In this case, the insertion corresponds to the rule $q\mapsto xyz$ and produces $j$ permutations in $\mq_m$ with $i+1$ left ascent-plateaus, $j-1$ double ascents and $k$ descent-plateaus;
  \item [\rm ($c_5$)]If the pair $mm$ is inserted at a position with label $p$, then the change of labeling is illustrated as follows: $$\cdots\sigma_i^p\sigma_{i+1}\cdots\mapsto \cdots\sigma_i^ym^xm^z\sigma_{i+1}\cdots.$$
In this case, the insertion corresponds to the rule $p\mapsto xyz$ and produces $k$ permutations in $\mq_m$ with $i+1$ left ascent-plateaus, $j$ double ascents and $k-1$ descent-plateaus.
\end{itemize}
By induction, we see that grammar~\eqref{grammar-02} generates all of the permutations in $\mq_m$.

Combining the above five cases, we see that
\begin{align*}
P_{n+1}(i,j,k)=&iP_n(i,j-1,k)+iP_n(i,j,k-1)+(j+1)P_n(i-1,j+1,k)+\\
                &(k+1)P_n(i-1,j,k+1)+(2n+3-2i-j-k)P_n(i-1,j,k).
\end{align*}
Multiplying both sides of the above recurrence relation by $x^iy^jz^k$ for all $i,j,k$, we get~\eqref{Pn-recu}
\end{proof}


\subsection{Equidistributed statistics}
\hspace*{\parindent}

Let $i \in [2n]$ and let $\sigma= \sigma_1\sigma_2\ldots \sigma_{2n}\in\mathcal{Q}_n.$ We define the action $\varphi_i$ as follows:
\begin{itemize}\item  If $i$ is a double ascent, then $\varphi_i(\sigma)$ is obtained by moving $\sigma_i$ to the right of the second $\sigma_i$, which forms a new pleateau $\sigma_i\sigma_i$;
\item If $i$ is a descent-plateau, then $\varphi_i(\sigma)$ is obtained by moving $\sigma_i$ to the right of $\sigma_{k}$, where
$k=\max\{j\in \{0,1,2,\ldots,i-1\}:\sigma_j < \sigma_i\}$.
\end{itemize}

For instance, if $\sigma=2447887332115665$, then $$\varphi_{1}(\sigma)=4478873322115665,~\varphi_{4}(\sigma)=2448877332115665,$$
and $\varphi_{9}(\varphi_{1}(\sigma))=\varphi_{6}(\varphi_{4}(\sigma))=\sigma$.
In recent years, the Foata and Strehl's group action has been extensively studied (see~\cite{Branden08,Lin15} for instance).
We define the Foata-Strehl action on Stirling permutations by
$$\varphi'_i(\sigma)=\left\{\begin{array}{lll}
\varphi_i(\sigma),&\text{ if $i$ is a double ascent or descent-plateau;}\\
\sigma,&\text{otherwise.}\\
\end{array}\right.$$

It is clear that the ${\varphi'_i}$'s are involutions and that they commute. Hence,
for any subset $S \subseteq[2n]$, we may define the function $\varphi'_S : \mathcal{Q}_n \mapsto \mathcal{Q}_n$ by
$\varphi'_S(\sigma)=\prod\limits_{i\in S}\varphi'_i(\sigma)$.
Hence the group $\mathbb{Z}^{2n}_2$ acts on $\mqn$ via the function $\varphi'_S$, where $S \subseteq[2n]$.

The third main result of this paper is given as follows, which is implied by~\eqref{Pn-recu}.
\begin{theorem}\label{thm-P(lap-dasc-desp)}
For any $n\geq 1$, we have
\begin{equation}\label{sym}
P_n(x,y,z)=P_n(x,z,y).
\end{equation}
Furthermore,
\begin{equation}\label{sym02}
\sum_{\sigma\in\mqn}x^{\lap(\sigma)}y^{\asc(\sigma)}=\sum_{\sigma\in\mqn}x^{\lap(\sigma)}y^{\plat(\sigma)}.
\end{equation}
\end{theorem}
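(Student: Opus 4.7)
The plan is to derive both identities directly from the recurrence \eqref{Pn-recu} already established in Theorem~\ref{mthm02}, which is the route suggested by the phrase ``implied by~\eqref{Pn-recu}''. No reanalysis of the statistics is needed; everything follows from a symmetry of the differential recurrence together with a specialization trick.

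First I would prove \eqref{sym} by induction on $n$. The base case $P_0(x,y,z)=1$ is trivially invariant under the swap $y\leftrightarrow z$. For the inductive step, observe that the right-hand side of \eqref{Pn-recu} is manifestly symmetric under this swap: the term $(2n+1)xP_n$ is symmetric, the coefficient $xy+xz-2x^2$ of $\partial_x P_n$ is symmetric, and the two terms $x(1-y)\partial_y P_n$ and $x(1-z)\partial_z P_n$ are interchanged under $y\leftrightarrow z$. Hence if $P_n(x,y,z)=P_n(x,z,y)$, each piece on the right of \eqref{Pn-recu} is invariant under the swap, and so $P_{n+1}(x,y,z)=P_{n+1}(x,z,y)$.

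Next I would derive \eqref{sym02} from \eqref{sym} by specialization, using the relations \eqref{asc-plat}. Since $\asc(\sigma)=\lap(\sigma)+\dasc(\sigma)$, the substitution $(x,y,z)\mapsto(xy,y,1)$ in the definition of $P_n$ gives
\[
P_n(xy,y,1)=\sum_{\sigma\in\mqn}(xy)^{\lap(\sigma)}y^{\dasc(\sigma)}=\sum_{\sigma\in\mqn}x^{\lap(\sigma)}y^{\asc(\sigma)},
\]
while the substitution $(x,y,z)\mapsto(xy,1,y)$, combined with $\plat(\sigma)=\lap(\sigma)+\desp(\sigma)$, gives
\[
P_n(xy,1,y)=\sum_{\sigma\in\mqn}x^{\lap(\sigma)}y^{\plat(\sigma)}.
\]
Applying \eqref{sym} with the triple $(xy,y,1)$ yields $P_n(xy,y,1)=P_n(xy,1,y)$, and \eqref{sym02} follows.

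No step presents a genuine obstacle; the only ``work'' is the direct inspection that the operator on the right of \eqref{Pn-recu} is stable under $y\leftrightarrow z$. As an alternative, one could attempt a purely bijective proof via the Foata--Strehl action $\varphi'_i$ introduced before the theorem, using the fact that $\varphi_i$ exchanges a double ascent with a descent-plateau on the $i$-th coordinate. In that approach the subtle point to verify would be that $\varphi_i$ preserves $\lap(\sigma)$ (because the newly created plateau at the relocated pair $\sigma_i\sigma_i$ is always a descent-plateau, not a left ascent-plateau, in the image permutation), which requires a short case analysis on the two types of moves. That route is conceptually attractive but longer than the two-line induction above, so I would present the inductive proof as the main argument and mention the Foata--Strehl interpretation only as a remark.
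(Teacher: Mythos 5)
Your proof is correct, but it takes a genuinely different route from the one written in the paper for the symmetry \eqref{sym}. The paper proves \eqref{sym} bijectively: it sets $S=\Dasc(\sigma)\cup\Desp(\sigma)$ and observes that the involution $\varphi'_S$ exchanges the sets $\Dasc(\sigma)$ and $\Desp(\sigma)$ while fixing $\Lap(\sigma)$, so that $\sigma\mapsto\varphi'_{S(\sigma)}(\sigma)$ is a bijection of $\mqn$ swapping the statistics $\dasc$ and $\desp$. Your argument instead runs induction on $n$ using the invariance of the differential operator in \eqref{Pn-recu} under $y\leftrightarrow z$ (the two terms $x(1-y)\partial_y$ and $x(1-z)\partial_z$ interchange, the rest is fixed); this is sound, and it is in fact the route the authors themselves gesture at when they say the theorem ``is implied by~\eqref{Pn-recu}.'' The specialization step you use to get \eqref{sym02} from \eqref{sym} via $\asc=\lap+\dasc$ and $\plat=\lap+\desp$ is identical to the paper's. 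The trade-off: your induction is shorter and purely formal, but the group-action proof establishes the stronger statement that the bistatistics $(\lap,\dasc,\desp)$ and $(\lap,\desp,\dasc)$ are \emph{equidistributed via an explicit involution}, and the same orbit decomposition is what powers the $\gamma$-expansion in Theorem~\ref{mthm03}; your closing remark correctly identifies the one point that needs checking in that approach, namely that $\varphi'_i$ preserves $\lap$.
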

\begin{proof}
For any $\sigma\in\mathcal{Q}_n$, we define
\begin{align*}
\Dasc(\sigma)&=\{i\in [2n]:\sigma_{i-1}<\sigma_i<\sigma_{i+1}\},\\
\Desp(\sigma)&=\{i\in [2n]:\sigma_{i-1}>\sigma_i=\sigma_{i+1}\},\\
\Lap(\sigma)&=\{i\in [2n]:\sigma_{i-1}<\sigma_i=\sigma_{i+1}\}.
\end{align*}
Let $S=S(\sigma)=\Dasc(\sigma)\cup \Desp(\sigma)$.
Note that $$\Dasc(\varphi'_S(\sigma))=\Desp(\sigma),~\Desp(\varphi'_S(\sigma))=\Dasc(\sigma)\text{ and }\Lap(\varphi'_S(\sigma))=\Lap(\sigma).$$
Therefore,
\begin{eqnarray*}P_n(x,y,z)&=&\sum_{\sigma\in\mqn}x^{\lap(\sigma)}y^{\dasc(\sigma)}z^{\desp(\sigma)}\\
&=&\sum_{\sigma'\in\mqn,}x^{\lap(\varphi'_{S(\sigma)}(\sigma))}y^{\desp(\varphi'_{S(\sigma)}(\sigma)))}z^{\dasc(\varphi'_{S(\sigma)}(\sigma)))}\\
&=&\sum_{\sigma\in\mqn}x^{\lap(\sigma)}z^{\dasc(\sigma)}y^{\desp(\sigma)}\\
&=&P_n(x,z,y).
\end{eqnarray*}
Combining~\eqref{asc-plat} and~\eqref{sym}, we see that
$P_n(xy,y,1)=P_n(xy,1,y)$. This completes the proof.
\end{proof}

\begin{theorem}\label{mthm03}
For $n\geq 1$, we have
\begin{equation*}\label{equi}
\sum_{\sigma\in\mqn}x^{\lap(\sigma)}y^{\dasc(\sigma)}z^{\desp(\sigma)}
=\sum_{\substack{1\leq i\leq n\\ 0\leq j \leq n-1}} \gamma_{n,i,j}x^{i}(y+z)^{j},
\end{equation*}
where  $$\gamma_{n,i,j}=\#\{\sigma\in\mqn: \lap(\sigma)=i,\dasc(\sigma)=j,\desp(\sigma)=0\}.$$
\end{theorem}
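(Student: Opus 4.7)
The plan is to derive the claimed $\gamma$-expansion by summing over orbits of the $\mathbb{Z}_2^{2n}$-action generated by the involutions $\varphi'_i$ from the previous subsection. The orbit picture will refine the symmetry in Theorem~\ref{thm-P(lap-dasc-desp)} from the level of statistic totals to the level of individual positions.

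First I would establish the key position-level invariance: along any orbit $\mathcal{O}$ the set $\Lap$ is preserved, and each $\varphi'_i$ leaves the four-way classification lap/dasc/desp/other of every position other than (the old and new locations of) $\sigma_i$ unchanged, while toggling the role of position $i$ itself between $\Dasc$ and $\Desp$. Thus for every $\tau \in \mathcal{O}$ one has $\lap(\tau)=\lap(\sigma)$ and $\dasc(\tau)+\desp(\tau)=\dasc(\sigma)+\desp(\sigma)=:j$, and the $j$ flippable positions can be toggled independently.

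Granted this, each orbit through a $\sigma$ with $\lap(\sigma)=i$ and total $j$ has exactly $2^j$ elements, and
$$\sum_{\tau \in \mathcal{O}} x^{\lap(\tau)} y^{\dasc(\tau)} z^{\desp(\tau)} = x^i (y+z)^j,$$
since each flippable position contributes independently a factor $y$ (kept as a double ascent) or $z$ (flipped to a descent-plateau). To identify orbits with counted objects on the right-hand side, I would pick in each orbit the unique representative with $\desp=0$, obtained by applying $\varphi'_{\Desp(\sigma)}$ to turn every descent-plateau into a double ascent; this representative has $\lap=i$ and $\dasc=j$, so the number of orbits with these invariants is exactly $\gamma_{n,i,j}$. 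Summing the orbit contributions then yields the identity.

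The main obstacle is the careful verification of the position-level locality in the first step: when $\varphi_i$ relocates $\sigma_i$ across the ``mountain'' of larger entries between its two occurrences, one must check that only the positions immediately adjacent to the moved entry (at its old and new locations) can change type, and that the net effect on those positions is precisely the toggle between double ascent and descent-plateau at the relocated entry, with all other lap, dasc, desp, and ``other'' positions retaining their classification. This case analysis is the technical heart of the argument; once it is in place the orbit decomposition and the extraction of the $\gamma$-expansion are routine, and in particular they provide a combinatorial refinement explaining the symmetry $P_n(x,y,z)=P_n(x,z,y)$.
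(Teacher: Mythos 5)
Your proposal is correct and follows essentially the same route as the paper: decompose $\mathcal{Q}_n$ into orbits of the Foata--Strehl action, take the unique representative with $\desp = 0$ in each orbit, and observe that each orbit with invariants $(i,j)$ contributes $x^i(y+z)^j$. The position-level locality check you flag as the technical heart is exactly the step the paper also asserts without detailed verification (namely that $\varphi'_S$ fixes $\Lap$ and swaps $|S|$ double ascents for descent-plateaus), so the two arguments are at the same level of rigor.
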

\begin{proof}
Define $$\NDP_{n,i,j}=\{\sigma\in\mqn:\lap(\sigma)=i,\dasc(\sigma)=j,\desp(\sigma)=0\}.$$
For any $\sigma\in \NDP_{n,i,j}$, let
$$[\sigma]=\{\varphi'_S(\sigma)\mid S\subseteq \Dasc(\sigma)\}.$$

For any $\sigma'\in [\sigma]$, suppose that $\sigma'=\varphi'_S(\sigma)$ for some $S\subseteq \Dasc(\sigma)$.
Then $$\lap(\sigma')=\lap(\sigma), \dasc(\sigma')=\dasc(\sigma)-|S|\text{ and }\desp(\sigma')=|S|.$$
Moreover, $\{[\sigma]\mid \sigma\in \NDP_{n,i,j}\}$ form a partition of $\mathcal{Q}_n$. Hence, \begin{eqnarray*}
&&\sum_{\sigma\in\mqn}x^{\lap(\sigma)}y^{\dasc(\sigma)}z^{\desp(\sigma)}\\
&=&\sum_{\sigma\in \NDP_n}\sum_{\sigma'\in[\sigma]}x^{\lap(\sigma')}y^{\dasc(\sigma')}z^{\desp(\sigma')}\\
&=&\sum_{\sigma\in \NDP_n}\sum_{S\subseteq \Dasc(\sigma)}x^{\lap(\varphi'_S(\sigma))}y^{\dasc(\varphi'_S(\sigma))}z^{\desp(\varphi'_S(\sigma))}\\
&=&\sum_{\sigma\in \NDP_n}\sum_{S\subseteq \Dasc(\sigma)}x^{\lap(\sigma)}y^{\dasc(\sigma)-|S|}z^{|S|}\\
&=&\sum_{\sigma\in \NDP_n}x^{\lap(\sigma)}\sum_{S\subseteq \Dasc(\sigma)}y^{\dasc(\sigma)-|S|}z^{|S|}\\
&=&\sum_{\sigma\in \NDP_n}x^{\lap(\sigma)}(y+z)^{\dasc(\sigma)}\\
&=&\sum_{i,j}\gamma_{n,i,j}x^{i}(y+z)^{j}.
\end{eqnarray*}
\end{proof}

Taking $y=z=1$ in Theorem~\ref{equi}, we have
$$N_n(x)=\sum_{\sigma\in\mqn}x^{\lap(\sigma)}=\sum_{i=1}^n\left(\sum_{j=0}^{n-1}2^j\gamma_{n,i,j}\right)x^i.$$
Let $N_n(x)=\sum_{k=1}^nN(n,k)x^k$. According to~\cite[Eq.~(24)]{Ma132},
$$N_n(x)=\sum_{k=1}^n2^{n-2k}\binom{2k}{k}k!\Stirling{n}{k}x^k(1-x)^{n-k}.$$
Thus, for $n\geq 1$, we have
$$\sum_{j=0}^{n-1}2^j\gamma_{n,i,j}=\sum_{j=1}^i(-1)^{i-j}2^{n-2j}\binom{2j}{j}\binom{n-j}{i-j}j!\Stirling{n}{j}.$$


\begin{theorem}
Let $A=\{u,v,w\}$ and
$G=\{u\rightarrow uvw,v\rightarrow 2uw, w\rightarrow uw\}$.
Then
\begin{equation}\label{Dnw}
D^n(w)=\sum_{\substack{1\leq i\leq n\\ 0\leq j \leq n-1}} \gamma_{n,i,j}u^iv^jw^{2n+1-2i-j}.
\end{equation}
Furthermore, the numbers $\gamma_{n,i,j}$ satisfy the recurrence relation
\begin{equation}\label{gamma-recu}
\gamma_{n+1,i,j}=i\gamma_{n,i,j-1}+2(j+1)\gamma_{n,i-1,j+1}+(2n+3-2i-j)\gamma_{n,i-1,j},
\end{equation}
with the initial conditions $\gamma_{1,1,0}=1$ and $\gamma_{1,i,j}=0$ for $i>1$ and $j\geq0$.
\end{theorem}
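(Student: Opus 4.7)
The plan is to derive the formula~\eqref{Dnw} by reducing this grammar to the one in Theorem~\ref{mthm02}, and then to obtain the recurrence~\eqref{gamma-recu} by applying $D$ once more and matching coefficients.

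First, I would introduce the surjection $\phi:\mathbb{Q}[x,y,z,p,q]\to\mathbb{Q}[x,z,p]$ identifying $y$ with $x$ and $q$ with $p$. A quick check that $\phi$ annihilates both $D_{G'}(y-x)=yzp-xzq$ and $D_{G'}(q-p)=0$ shows that $\phi$ intertwines the derivation $D_{G'}$ of Theorem~\ref{mthm02} with the induced derivation $D(x)=xzp$, $D(z)=x^2z$, $D(p)=x^2z$ on the target. This derivation preserves the subring $\mathbb{Q}[x^2,z,p]$, and the algebra isomorphism $\mathbb{Q}[u,v,w]\to\mathbb{Q}[x^2,z,p]$ sending $u\mapsto x^2$, $v\mapsto 2p$, $w\mapsto z$ transports it to precisely the grammar $G$ of the present theorem. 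Thus $D_G^n(w)$ is obtained from $D_{G'}^n(z)$ by setting $y=x$, $q=p$ and rewriting in $(u,v,w)$.

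Applying this to the closed form $D_{G'}^n(z)=z\sum_{i,j,k}P_n(i,j,k)(xy)^iq^jp^kz^{2n-2i-j-k}$ sends each monomial $(xy)^iq^jp^k$ to $u^i(v/2)^{j+k}$. By Theorem~\ref{thm-P(lap-dasc-desp)} one has $P_n(x,y,z)=\sum_{i,m}\gamma_{n,i,m}x^i(y+z)^m$, and hence $\sum_{j+k=m}P_n(i,j,k)=2^m\gamma_{n,i,m}$; the factors $2^m$ and $(1/2)^m$ cancel and give
\[
D_G^n(w)=\sum_{i,m}\gamma_{n,i,m}u^iv^mw^{2n+1-2i-m},
\]
which is~\eqref{Dnw}. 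The initial case $n=1$ is immediate since the only Stirling permutation $11$ has $\lap=1$, $\dasc=\desp=0$, giving $\gamma_{1,1,0}=1$.

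To deduce the recurrence, I would apply $D_G$ to~\eqref{Dnw}. Since $D(u^iv^jw^k)=iu^iv^{j+1}w^{k+1}+2ju^{i+1}v^{j-1}w^{k+1}+ku^{i+1}v^jw^k$, matching the coefficient of $u^iv^jw^{2n+3-2i-j}$ in $D_G^{n+1}(w)$ picks up three contributions coming from the $(i,j-1)$, $(i-1,j+1)$ and $(i-1,j)$ terms of the previous level with multiplicities $i$, $2(j+1)$ and $2n+3-2i-j$ respectively, yielding exactly~\eqref{gamma-recu}. The main obstacle I anticipate is verifying the intertwining property of $\phi$ carefully; once that quotient is established, everything else reduces to bookkeeping with the Leibniz rule and one binomial identity. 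A self-contained alternative would be to construct a grammatical labeling of descent-plateau-free Stirling permutations directly encoding $u,v,w$, but the symmetry between $\dasc$ and $\desp$ built into the single letter $v$ seems less transparent than the quotient argument above.
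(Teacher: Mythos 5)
Your proposal is correct and follows essentially the same route as the paper: the paper simply sets $u=xy$, $v=p+q$, $w=z$ and observes directly that $D(xy)=xyz(p+q)$, $D(p+q)=2xyz$, $D(z)=xyz$, so that the subalgebra $\mathbb{Q}[xy,\,p+q,\,z]$ is $D$-stable and carries the grammar $G$; your quotient $y\mapsto x$, $q\mapsto p$ followed by restriction to $\mathbb{Q}[x^2,z,p]$ is just the image of that same substitution and accomplishes the identical reduction. The remaining steps --- invoking Theorem~\ref{thm-P(lap-dasc-desp)} to collapse $\sum_{j+k=m}P_n(i,j,k)q^jp^k$ into $\gamma_{n,i,m}v^m$, and applying $D$ once more with coefficient matching for~\eqref{gamma-recu} --- coincide with the paper's proof.
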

\begin{proof}
From the grammar~\eqref{grammar-02}, we see that
\begin{align*}
D(xy)&=xyz(p+q),\\
D(p+q)&=2xyz,\\
D(z)&=xyz.
\end{align*}
Set $u=xy,v=p+q$ and $w=z$. Then $D(u)=uvw,D(v)=2uw$ and $D(w)=uw$.
Combining Theorem~\ref{mthm02} and Theorem~\ref{mthm03}, we get~\eqref{Dnw}.
Since $D^{n+1}(w)=D(D^n(w))$, we obtain that
\begin{align*}
D^{n+1}(w)&=D\left(\sum_{i,j}\gamma_{n,i,j}u^iv^jw^{2n+1-2i-j}\right)\\
&=\sum_{i,j}i\gamma_{n,i,j}u^iv^{j+1}w^{2n+2-2i-j}+2\sum_{i,j}j\gamma_{n,i,j}u^{i+1}v^{j-1}w^{2n+2-2i-j}+\\
&\sum_{i,j}(2n+1-2i-j)\gamma_{n,i,j}u^{i+1}v^jw^{2n+1-2i-j}.
\end{align*}
Equating the coefficients of $u^iv^jw^{2n+1-2i-j}$ on both sides of the above equation, we obtain~\eqref{gamma-recu}.
\end{proof}

Let $G_n(x,y)=\sum_{i,j} \gamma_{n,i,j}x^iy^j$.
Multiplying both sides of the recurrence relation~\eqref{gamma-recu} by $x^iy^j$ for all $i,j$, we get
that
\begin{equation}\label{Gnxy-recu}
G_{n+1}(x,y)=(2n+1)xG_n({x,y})+(xy-2x^2)\frac{\partial}{\partial x}G_n({x,y})+(2x-xy)\frac{\partial}{\partial y}G_n({x,y}).
\end{equation}
The first few of the polynomials $G_n(x,y)$ are given as follows:
$$G_0(x,y)=1,
G_1(x,y)=x,
G_2(x,y)=xy+x^2,
G_3(x,y)=xy^2+4x^2y+2x^2+x^3.$$

\subsection{Connection with Eulerian numbers}
\hspace*{\parindent}

Recall that the {\it Eulerian numbers} are defined by $$\Eulerian{n}{k}=\#\{\pi\in\msn: \des(\pi)=k\}.$$
The numbers $\Eulerian{n}{k}$ satisfy the recurrence relation
$$\Eulerian{n+1}{k}=(k+1)\Eulerian{n}{k}+(n+1-k)\Eulerian{n}{k-1},$$
with the initial conditions $\Eulerian{1}{0}=1$ and $\Eulerian{1}{k}=0$ for $k\geq 1$.

\begin{theorem}\label{mthm05}
For $n\geq 1$ and $0\leq k\leq n-1$, we have
\begin{equation*}
\gamma_{n,n-k,k}=\Eulerian{n}{k}.
\end{equation*}
\end{theorem}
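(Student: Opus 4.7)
My plan is to prove Theorem~\ref{mthm05} by induction on $n$, using the recurrence~\eqref{gamma-recu} together with a support lemma asserting that $\gamma_{n,i,j}=0$ whenever $i+j>n$. Once this off-diagonal vanishing is known, the inductive step matches the recurrence for $\gamma_{n+1,n+1-k,k}$ term-by-term against the classical Eulerian recurrence $\Eulerian{n+1}{k}=(k+1)\Eulerian{n}{k}+(n+1-k)\Eulerian{n}{k-1}$.

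The support lemma reduces to the combinatorial bound $\lap(\sigma)+\dasc(\sigma)+\desp(\sigma)\leq n$, which I would prove by charging each contribution to one of the $n$ values in $[n]$, with each value responsible for at most one unit. For $v\in[n]$, let $p_v<q_v$ be its two positions in $\sigma$. If $q_v=p_v+1$, then $p_v$ is either a left-ascent-plateau or a descent-plateau (depending on the sign of $\sigma_{p_v-1}-v$), while $q_v$ satisfies $\sigma_{q_v-1}=\sigma_{q_v}=v$ and so contributes to none of the three statistics. If instead $q_v>p_v+1$, the Stirling constraint forces $\sigma_{p_v+1}>v$ and $\sigma_{q_v-1}>v$; then $p_v$ is a double ascent exactly when $\sigma_{p_v-1}<v$ and is uncounted otherwise, while $q_v$ is a double descent or valley, again uncounted. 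Since $\gamma_{n,i,j}$ only records configurations with $\desp=0$, this bound in particular forces $\gamma_{n,i,j}=0$ as soon as $i+j>n$.

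Substituting $i=n+1-k$ and $j=k$ into~\eqref{gamma-recu} then gives
\[ \gamma_{n+1,n+1-k,k}=(n+1-k)\gamma_{n,n+1-k,k-1}+2(k+1)\gamma_{n,n-k,k+1}+(k+1)\gamma_{n,n-k,k}. \]
The middle term has $(n-k)+(k+1)=n+1>n$, so it vanishes by the support lemma. The remaining two terms lie on the diagonal $i+j=n$, hence by the inductive hypothesis they equal $(n+1-k)\Eulerian{n}{k-1}$ and $(k+1)\Eulerian{n}{k}$ respectively; summing and invoking the Eulerian recurrence produces $\gamma_{n+1,n+1-k,k}=\Eulerian{n+1}{k}$. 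The base case $n=1$ is immediate from $\gamma_{1,1,0}=1=\Eulerian{1}{0}$, and the boundary cases $k\in\{0,n\}$ go through via $\Eulerian{n}{-1}=0$ together with $\gamma_{n,0,j}=0$ (the latter because the pair $nn$ always forms a plateau preceded by a smaller entry, so every $\sigma\in\mqn$ satisfies $\lap(\sigma)\geq 1$). The main obstacle is really the support lemma: the recurrence is designed so that, once the off-diagonal entries vanish, the induction is essentially bookkeeping against the Eulerian recurrence, but the vanishing itself rests on the structural observation that each value of a Stirling permutation is responsible for at most one of the three statistics.
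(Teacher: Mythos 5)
Your proposal is correct and takes essentially the same route as the paper: substitute $i=n+1-k$, $j=k$ into the recurrence~\eqref{gamma-recu}, use the vanishing $\gamma_{n,i,j}=0$ for $i+j>n$ to discard the middle term $2(k+1)\gamma_{n,n-k,k+1}$, and match the two surviving terms against the Eulerian recurrence. The only minor divergence is that you prove the support lemma combinatorially via the bound $\lap(\sigma)+\dasc(\sigma)+\desp(\sigma)\leq n$ (charging each occurrence to a value of $[n]$), whereas the paper deduces the same vanishing directly from~\eqref{gamma-recu} by induction; both arguments are valid, and your charging argument correctly fills in a step the paper labels ``easy to verify.''
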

\begin{proof}
Set $a(n,k)=\gamma_{n,n-k,k}$. Then $a(n,k-1)=\gamma_{n,n-k+1,k-1}$.
Using~\eqref{gamma-recu}, it is easy to verify that
\begin{equation*}\label{nij0}
\gamma_{n,i,j}=0 {\quad\quad\text{for $i+j>n$}}.
\end{equation*}
Hence $\gamma_{n,n-k,k+1}=0$.
Therefore, the numbers $a(n,k)$ satisfy the recurrence relation
$$a(n+1,k)=(k+1)a(n,k)+(n+1-k)a(n,k-1).$$
Since the numbers $a(n,k)$ and $\Eulerian{n}{k}$ satisfy the same recurrence relation and initial conditions, so they agree.
This completes the proof.
\end{proof}

%
%
%
%
%
%
%
%

\noindent{\bf A bijective proof of Theorem~\ref{mthm05}:}
\begin{proof}
Let $\sigma\in\mqn$.
Note that every element of $[n]$ appears exactly two times in $\sigma$.
Let $\alpha(\sigma)$ be the permutation of $\msn$ obtained from $\sigma$ by deleting all of the first $i$ from left to right, where $i\in [n]$.
Then $\alpha$ is a map from $\mqn$ to $\msn$.
For example, $\alpha(\textbf{34}43\textbf{5}5\textbf{6}6\textbf{1}\textbf{2}21)=435621$.
Let $$\mathcal{D}_n=\{\sigma\in\mqn: \lap(\sigma)=i,\dasc(\sigma)=n-i,\desp(\sigma)=0\}.$$

Let $x$ be a given element of $[n]$. For any $\sigma\in\mathcal{Q}_n$, we define the action $\beta_x$ on $\mqn$ as follows:
\begin{itemize}
\item  Read $\sigma$ from left to right and let $i$ be the first index such that $\sigma_i=x$;
\item Move $\sigma_i$ to the right of $\sigma_{k}$, where
$k=\max\{j\in \{0,1,2,\ldots,i-1\}:\sigma_j < \sigma_i\}$, where $\sigma_0=0$.
\end{itemize}

For example, if $\sigma=3443578876652211$, then $$\beta_1(\sigma)=\textbf{1}344357887665221,~\beta_2(\sigma)=\textbf{2}344357887665211,~\beta_6(\sigma)=34435\textbf{6}7887652211.$$
It is clear that $\beta_x(\beta_y(\sigma))=\beta_y(\beta_x(\sigma))$ for any $x,y\in [n]$. For any $S\subseteq [n]$, let
$\beta_{S}: \mathcal{Q}_n \mapsto \mathcal{Q}_n$ be a function defined by
$$\beta_{S}(\sigma)=\prod\limits_{x\in S}\beta_x(\sigma).$$
It is easy to verify that $$\beta_{[n]}(\sigma)\in \mathcal{D}_n,~\alpha(\sigma)=\alpha(\beta_{[n]}(\sigma)), \beta_{[n]}(\sigma)=\sigma~{\text {if $\sigma\in\mathcal{D}_n$}}.$$

Let $\alpha|_{\mathcal{D}_n}$ denote the restriction of the map $\alpha$ on the set $\mathcal{D}_n$. Then $\alpha|_{\mathcal{D}_n}$ is a map from
${\mathcal{D}_n}$ to $\msn$.
Let $\pi=\pi(1)\pi(2)\cdots\pi(n)\in\msn$. The inverse ${\alpha|_{\mathcal{D}_n}^{-1}}$ is defined as follows:
\begin{itemize}
\item  let $\sigma=\sigma_1\sigma_2\ldots \sigma_{2n}$ be the Stirling permutation such that $\sigma_{2i-1}=\sigma_{2i}=\pi(i)$ for each $i=1,2,\ldots,n$;
\item let $S(\pi)=\{\pi_i: \pi_{i-1}>\pi_i, 2\leq i\leq n\}$;
\item let ${\alpha|_{\mathcal{D}_n}^{-1}}(\pi)=\beta_{S(\pi)}(\sigma)$.
\end{itemize}
 Note that $$\lap({\alpha|_{\mathcal{D}_n}^{-1}}(\pi))+\dasc({\alpha|_{\mathcal{D}_n}^{-1}}(\pi))=n\text{ and }\dasc({\alpha|_{\mathcal{D}_n}^{-1}}(\pi))=\des(\pi).$$
Then $\alpha|_{\mathcal{D}_n}$ is a bijection from
${\mathcal{D}_n}$ to $\msn$. This completes the proof.
\end{proof}

\begin{example}
The bijection between $\ms_3$ and ${\mathcal{D}_3}$ is demonstrated as follows:
\begin{align*}
123&\leftrightarrow 112233 ~(S=\emptyset)\leftrightarrow \beta_S(112233)=112233;\\
132&\leftrightarrow 113322 ~(S=\{2\})\leftrightarrow  \beta_S(113322)=112332;\\
213&\leftrightarrow 221133 ~(S=\{1\})\leftrightarrow  \beta_S(221133)=122133;\\
231&\leftrightarrow 223311 ~(S=\{1\})\leftrightarrow  \beta_S(223311)=122331;\\
312&\leftrightarrow 331122 ~(S=\{1\})\leftrightarrow  \beta_S(331122)=133122;\\
321&\leftrightarrow 332211 ~(S=\{1,2\})\leftrightarrow  \beta_{S}(332211)=123321.
\end{align*}
\end{example}

\section{Concluding remarks}\label{Section-4}
\hspace*{\parindent}

In this paper, we introduce several variants of the ascent-plateau statistic on Stirling permutations.
Recall that Park~\cite{Park94} studied the $(p,q)$-analogue of the descent polynomials of Stirling permutations:
\begin{align*}
C_n(x,p,q)=\sum_{\sigma\in\mqn}x^{\des(\sigma)}p^{\inv(\sigma)}q^{\maj(\sigma)}.
\end{align*}
It would be interesting to study the relationship between $C_n(x,p,q)$ and the following polynomials:
\begin{align*}
\sum_{\sigma\in\mqn}x^{\ap(\sigma)}y^{\lap(\sigma)}p^{\inv(\sigma)}q^{\maj(\sigma)}.
\end{align*}
In~\cite{Egge10}, Egge introduced the definition of Legendre-Stirling permutation,
which shares similar properties with Stirling permutation.
One may study the ascent-plateau statistic on Legendre-Stirling permutations.


\end{document}